%%%%%%%%%%%%%%%%%%%%%%%%
%%%    Article
%%%    elementary proof  of vanishing of H2(W,W)
%%%     Final 28.11./rev. 16.1./rev. 18.1./23.1/27.1.
%%%%    10.2 adjusted to the publication.
%%%%     8.5.2012    Ref. [1] updated
%%%%%%%%%%%%%%%%%%%%%%%
\documentclass[11pt,reqno]{amsart}
\usepackage{amssymb}
\usepackage{amsfonts}
\usepackage{amsthm}
\usepackage{amscd}
\textheight=20.5truecm
\numberwithin{equation}{section}
\overfullrule=0pt
\theoremstyle{plain}
\newtheorem{theorem}{Theorem}[section]
\newtheorem{proposition}[theorem]{Proposition}
\newtheorem{corollary}[theorem]{Corollary}
\newtheorem{lemma}[theorem]{Lemma}
\theoremstyle{definition}

\newtheorem{remark}[theorem]{Remark}

%
%%%%%%%%%%%%%%%%%%%%%%%%%%%%%%%%%%%%%%%%%%%%%%
%%%%%%%%%%%%%%%%%%%%%%%%%%%%%%%%%%%%%%%%%%%%
\textwidth15.0cm
%\textheight25.7cm
%\normalbaselines
%\parindent0.8cm
\hoffset-1.2cm
\headsep20pt
%\voffset-3cm
%\pagestyle{empty}
%%%%%%%%%%%%%%%%%%%%%%%%%%%%%%%%%%%%%%%%%%%%%%%%%%%%%%%%%%%%%%
%%%
%
%\input wittmac.tex
%%%%%%%
\newcommand{\refE}[1]{(\ref{E:#1})}
\newcommand{\refS}[1]{Section~\ref{S:#1}}

\newcommand{\refT}[1]{Theorem~\ref{T:#1}}

\newcommand{\refP}[1]{Proposition~\ref{P:#1}}

\newcommand{\refR}[1]{Remark~\ref{R:#1}}
\newcommand{\refC}[1]{Corollary~\ref{C:#1}}
\newcommand{\refL}[1]{Lemma~\ref{L:#1}}
%%%%%%%%%%%%%%%%%%%%%%%%%%%%%%%%%%%%%%%%%%%%%%%%%%%

\newcommand{\C}{\ensuremath{\mathbb{C}}}
\newcommand{\N}{\ensuremath{\mathbb{N}}}

\newcommand{\Pro}{\ensuremath{\mathbb{P}}}
\newcommand{\Z}{\ensuremath{\mathbb{Z}}}
\newcommand{\K}{\ensuremath{\mathbb{K}}}

%%%%%%%%%%%%%%%%%%%%%%%%%%%%%%%%%%

%%%%%%%%%%%%%%%%%%%%%%%%%%%%%%%%%%%%%
\newcommand{\W}{\mathcal{W}}
\newcommand{\V}{\mathcal{V}}

\newcommand{\Ho}{\mathrm{H}}
\newcommand{\chara}{\mathrm{char}}

\newcommand{\ldot}{\,.\,} 

%%%%%%%%%%%%%%%%%%%%%%%%%%%%%%%%%%%%%%%%%
%%%%%%%%%%%%%%%%%%%%%%%%%%%%%%%%%%%%%%%%%%%%%%%%
\begin{document}
%%%%%%%%%%%%%%%%%%%%%%%%%%%%%%%%%%%%%%%
%%%%%%%%%%%%%%%%%%%%%%%%%%%%%%%%%%%%%%%%%%%%%%%%%%%
\title[Vanishing of the second cohomology of the Virasoro algebra] 
{An elementary proof of the vanishing of the 
second cohomology of the Witt and Virasoro algebra with values in the
adjoint module}
\author[Martin Schlichenmaier]{Martin Schlichenmaier}
\thanks{This work was finalized during a visit to the  Institut
  Mittag-Leffler
(Djursholm, Sweden) in the frame of the program
``Complex Analysis and
  Integrable
Systems'', Fall 2011. Partial  support by
the ESF networking programme HCAA, and the
Internal Research Project  GEOMQ08,  University of Luxembourg,
is acknowledged.}
\address{%
University of Luxembourg\\
Mathematics Research Unit, FSTC\\
Campus Kirchberg\\ 6, rue Coudenhove-Kalergi,
L-1359 Luxembourg-Kirchberg\\ Luxembourg
}
\email{martin.schlichenmaier@uni.lu}
%%%%%%%%%%%%%%%%%%%%%%%%%%%%%%%%%%%%%%%%%%%%%%%%%%%%%%%%
\begin{abstract}
By elementary and direct calculations the 
vanishing of the (algebraic) second
Lie algebra cohomology of the Witt and the Virasoro algebra with 
values in the adjoint module is shown.
This yields infinitesimal and formal rigidity or these 
algebras.
The first (and up to now only) proof of this 
important result 
 was given 1989 by Fialowski in
an unpublished note.
It is based on cumbersome calculations.
 Compared to the original proof
the presented one is quite elegant and considerably simpler.
\end{abstract}
%%%%%%%%%%%%%%%%%%%%%%%%%%%%%%%%%%%%%%%%%%%%%%%%%%%%%%%%%
\subjclass{Primary: 17B56; Secondary: 17B68, 17B65, 17B66, 14D15, 
81R10, 81T40}
\keywords{Witt algebra; Virasoro
algebra; Lie algebra cohomology; Deformations of algebras; rigidity;
conformal field theory}
\date{28.11.2011, rev. 27.1.2012, reference update after publication 
8.5.2012}
\maketitle

\vskip 1.0cm
%%%%%%%%%%%%%%%%%%%%%%%
% section 1
\section{Introduction}\label{S:intro}
The simplest  nontrivial 
infinite dimensional Lie algebras are the  Witt algebra 
and its central extension the Virasoro algebra.
The Witt algebra is related to the Lie algebra  of 
the group of diffeomorphisms of the unit circle.
The central extension comes into play as one 
is typically forced to consider projective actions if one
wants to quantize a classical system or wants to regularize a
field theory.
There is a huge amount of literature about the
application of these algebras. Here it is not the place to
give even a modest  overview about these applications.

The { Witt algebra} $\W$ is the 
graded Lie algebra  generated as vector space by the
elements $\{e_n\mid n\in \mathbb{Z}\}$ with 
Lie structure  
\begin{equation}
[e_n,e_m]=(m-n)e_{n+m},\quad n,m\in\Z.
\end{equation}
The {Virasoro algebra} $\V$ is  the universal one-dimensional 
central extension. Detailed definitions and descriptions are
given in \refS{algebra}.
The goal of this article is to 
give an elementary proof that the
second Lie algebra cohomology of both algebras  with 
values in the adjoint modules 
will vanish.
The result will be stated in 
\refT{main} below.
These cohomology spaces are related to deformations of
the algebras. 
In particular, if they  vanish 
the algebras will be infinitesimal and formally rigid,
see \refC{rigid}.
Note that this does not mean that the algebras are
analytically or geometrically  rigid. 
Indeed, together with Alice Fialowski we showed in
\cite{FiaSchlV} that there 
exist natural, geometrically defined, 
nontrivial families of Lie algebras given by
Krichever-Novikov type algebras 
associated to  elliptic curves.
These families appeared 
in \cite{SchlDeg}.
In these families
the special fiber is the Witt (resp. 
Virasoro) 
algebra but all other fibers are non-isomorphic to 
it. Similar results for the current, resp. affine Lie algebras
can be found in \cite{FiaSchlaff},  
\cite{FiaSchlaffo}.

The result on the vanishing of the second Lie algebra cohomology
of the Witt algebra should clearly be attributed to 
Fialowski. There exists an unpublished manuscript
\cite{Fianote} 
by her, dating from 1989, where she does  explicit calculations
\footnote{Note added after publication of the current
article: The content of the mentioned 
note of Fialowski is in the meantime accessible on the archive 
and  will appear in Jour. Math. Phys. See \cite{Fianote} for 
details.}.
These calculations were quite cumbersome not really
 appealing for journal publication.
In 1990 Fialowski gave the statement of the rigidity of the Witt 
and Virasoro algebra in \cite{Fiajmp} without proof.
Later, in the above-mentioned joint paper of the author with her
\cite{FiaSchlV} a sketch of its  proof was presented.
It is based on the calculations
of the cohomology of the Lie algebra $Vect(S^1)$ of vector fields on
$S^1$ with values in the adjoint module.
Hence, this proof is not purely algebraic.
Based on results of Tsujishita \cite{Tsu}, Reshetnikov \cite{Resc},  
and Goncharova \cite{Gon} we showed   that  
\begin{equation}
\Ho^*(Vect(S^1),Vect(S^1))=\{0\}.
\end{equation}
See also the book of Guieu and Roger \cite{GR}.
We argued that by density arguments 
the vanishing of the cohomology of the Witt algebra will follow.
This is indeed true if one considers 
continuous cohomology.
But here we are dealing with 
arbitrary cohomology. 
In a recent attempt to fill  the details how to
extend the 
arguments to this setting, 
we run into troubles.  
We would have been forced to make considerable 
cohomological changes to obtain finally certain boundedness 
properties  to guarantee convergence with respect to the
topology coming from the Lie algebra of vector fields.
Hence, there was an incentive to return to a direct algebraic 
and elementary proof. Indeed, I found a very elementary, 
computational, but nevertheless reasonable short and  elegant proof
which is simpler than Fialowski's 
original calculations \cite{Fianote}.

As the Witt and Virasoro algebra are of fundamental importance inside
of mathematics and in the applications, and up to now 
there is no complete published proof,
the presented proof is for sure worth-while to publish.
The proof avoids the heavy machinery  of 
Tsujishita, Reshetnikov,  and Goncharova.%
\footnote{As Fuks writes in his book \cite[p.119]{Fuks}
``Theorem 2.3.1 [Goncharova's theorem] still remains one of the most
difficult theorems in the cohomology theory of infinite-dimensional
Lie algebras.''}

Furthermore the presented
proof has the advantage that by its algebraic nature it will
be valid for every field $\K$ of characteristic zero.
Moreover, this is one of the rare occasions where algebraic
cohomology is calculated.
From the very beginning it was desirable to have a purely algebraic 
proof of the vanishing of the algebraic cohomology.

\medskip
In \refS{algebra} we give the definition of both the
Witt and Viraosoro algebra and make some remarks on their
graded structure. 
The graded structure 
will play an important role in the article. 

In \refS{cohomo} 
after recalling the definition of general Lie algebra cohomology 
the 
2-cohomology of a Lie algebra  with values in the
adjoint module is considered in more detail. 
Some facts about its relation to deformations 
of the algebra are
quoted to allow to judge the importance of this cohomology
\cite{FiaSchlaffo}.
The main result about the vanishing of the second 
cohomology and the rigidity for the Witt and Virasoro algebra is 
formulated in \refT{main} and \refC{rigid}.
The cohomology considered in this article is algebraic cohomology
without  any restriction on the cocycles.

In \refS{witt} we use the graded structure of the
algebras to decompose their cohomology into
graded pieces. It is quite easy then to show
that for degree $d\ne 0$, the degree $d$  parts will vanish.
Hence, everything is reduced to degree zero.
This is due to the fact that the grading is induced by the
action of a special element. Here it is the element $e_0$.

The vanishing of the 
degree zero part 
 is  more involved. It will be presented for the 
Witt algebra 
in \refS{zero}. 
It is the computational core of the article, but 
the computations are elementary.

In \refS{vira} we extend this to the 
Virasoro algebra $\V$. We show that the vanishing of the cohomology
for $\W$ implies the same for its central extension $\V$.

\medskip
As the core of the proof is elementary, I want to keep this spirit
throughout the article. Hence, it will be rather self-contained and
elementary. 
We will not use things like long exact cohomology sequences, etc.
The proofs are purely algebraic and the vanishing and  
rigidity theorems are true  over every field 
$\K$ of characteristic zero.

%%%%%%%%%%%%%%%%%%%%%%%%%%%%%%%%%%%%%%%%%%%
\section{The algebras}\label{S:algebra}

The {\it Witt algebra} $\W$ is the Lie algebra  
generated as vector space over a field $\K$ by the
elements 
%\newline
$\{e_n\mid n\in \mathbb{Z}\}$ with 
Lie structure  
\begin{equation}\label{E:Wstruct}
[e_n,e_m]=(m-n)e_{n+m},\quad n,m\in\Z.
\end{equation}
As in the cohomology computations we have to divide by
arbitrary integers we assume for the whole article that
$\chara(\K)=0$.
\begin{remark}
In conformal field theory $\K$ will be $\C$ 
and the Witt algebra
can be realized as complexification of the Lie algebra of
polynomial vector fields $Vect_{pol}(S^1)$ on the circle $S^1$,
which is a subalgebra of $Vect(S^1)$, 
die Lie algebra of all $C^\infty$ vector
fields  on the circle. 
In this realization $e_n=\exp(\mathrm{i} \, n\, \varphi)
\frac {d}{d\varphi}$.
The Lie product is the usual bracket of vector fields. 

An alternative realization is given as the
algebra of meromorphic vector fields on the Riemann sphere
$\Pro^1(\C)$ which are holomorphic outside $\{0\}$ and 
$\{\infty\}$. In this realization 
$e_n=z^{n+1}\frac {d}{dz}$.
\end{remark}

A very important fact is that the Witt algebra is a $\Z$-graded Lie
algebra.
We define the degree by 
setting $\deg(e_n):=n$, then the Lie product 
between  elements
of degree $n$ and of degree $m$ is of degree $n+m$ 
(if nonzero).
The homogeneous spaces $\W_n$ of degree $n$ are one-dimensional with basis
$e_n$.
Crucial 
for the following is the additional fact
that the eigenspace decomposition of the
element $e_0$, acting via the adjoint action on $\W$ 
coincides with the decomposition into homogeneous 
subspaces. This follows from
\begin{equation}\label{E:eigen}
[e_0,e_n]=n\,e_n=\deg(e_n)\,e_n.
\end{equation}
Another property, which will play a role,  is
that $[\W,\W]=\W$. That means that $\W$ is a perfect Lie algebra.
In fact, 
\begin{equation}\label{E:perfect}
e_n=\frac 1n[e_o,e_n], \quad n\ne 0,\qquad
e_0=\frac 12[e_{-1},e_{1}].
\end{equation}

The {\it Virasoro algebra} $\V$ is the universal  one-dimensional
central extension of $\W$. 
As vector space it is the direct sum $\V=\K\oplus \W$.
If we set for $x\in\W$, 
$\hat x:=(0,x)$, and $t:=(1,0)$
then 
its basis elements are $\hat e_n, \ n\in\Z$ and 
$t$ with the  Lie product
\begin{equation}\label{E:Vstruct}
[\hat e_n,\hat e_m]=(m-n)\hat e_{n+m}-\frac 1{12}(n^3-n)\delta_n^{-m}\,t,\quad 
\quad
[\hat e_n,t]=[t,t]=0,
\end{equation}
for all $n,m\in\Z$.
Here $\delta_k^l$ is the Kronecker delta which is equal to 1 if
$k=l$, otherwise zero.
If we set $\deg(\hat e_n):=\deg(e_n)=n$ and $\deg(t):=0$ then 
$\V$ becomes a graded algebra. 
Let $\nu$ be the Lie homomorphism mapping the 
central element $t$ to $0$ and $\hat x$ to $x$
inducing the following short exact sequence
of
Lie algebras 
\begin{equation}\label{E:cext}
\begin{CD}
0 @>>> \K  @>>>\V @>\nu>> \W @>>> 0\ .
\end{CD}
\end{equation} 
This sequence does not split, i.e. 
it is a non-trivial central extension.

In some abuse of notation we identify the element $\hat x\in\V$ with
$x\in\W$ and after identification 
we have $\V_n=\W_n$ for $n\ne 0$ and $\V_0={\langle e_0,t\rangle}_{\K}$.
Note that the relation \refE{eigen} 
inducing the eigenspace decomposition for the grading element
$\hat e_0=e_0$ remains true.

The expression
$\frac 1{12}(n^3-n)\delta_n^{-m}$ is the defining
cocycle for the central extension. This form is given in a 
standard normalisation -- others are possible.
It is a Lie algebra two-cocycle of $\W$ with values in the trivial 
module.
The equivalence classes of central extensions are in 1:1
correspondence to the cohomology classes $\Ho^2(\W,\K)$, see the
next section for more details.
It is well-known that
$\dim \Ho^2(\W,\K)=1$, and that the class of the above
cocycle is a generator.
% see also \refR{rem1}.

%%%%%%%%%%%%%%%%%%%%%%%%%%%%%%%%%%%%%%%%%%%%
\section{Cohomology and Deformations}\label{S:cohomo}

Let us recall for completeness and further reference the definition of the
{\it Lie algebra cohomology} of a Lie algebra $W$ with
values in a Lie module $M$ over $W$.
We denote the Lie module structure by $W\times M\to M, \
(x,m)\mapsto x\ldot m$.
A $k$-cochain is an alternating $k$-multilinear map
$W\times W\times\cdots \times W\to M$ ($k$ copies of $W$).
The vector space of $k$-cochains is denoted by 
$C^k(W;M)$.
%As we are dealing with infinite dimensional Lie algebras and modules,
%we
%have to be very careful whether  we impose additional conditions,
%or even allow somewhere completions.
Here we will deal exclusively with algebraic cohomology.
%Hence, no additional conditions.

Next we have the family of coboundary operators 
\begin{equation}
\delta_k: C^k(W;M)\to  C^{k+1}(W;M),\quad k\in\N,\quad
\text{with}\quad 
\delta_{k+1}\circ\delta_k=0.
\end{equation}
Here we will  only consider the second cohomology
\begin{equation}\label{E:2cocgen}
\begin{aligned}
\delta_2\psi(x,y,z)&:=
\psi([x,y],z)+
\psi([y,z],x)+
\psi([z,x],y)
\\&\qquad
-x\ldot \psi(y,z)
+y\ldot\psi(x,z)
-z\ldot\psi(x,y),
\\
(\delta_1\phi)(x,y)&:=
\phi([x,y])-x\ldot\phi(y) +y\ldot\phi(x).
\end{aligned}
\end{equation}
A $k$-cochain $\psi$ is called   a {\it k-cocycle} if it lies in the kernel of
the $k$-coboundary operator $\delta_k$.
It is called a   {\it k-coboundary} if
it lies in the image of the $(k-1)$-coboundary operator.

By $\delta_{k}\circ\delta_{k-1}=0$ the 
vector space quotient of cocycles modulo
coboundaries
is well-defined. It is called the vector space of 
{\it $k$-Lie algebra cohomology} of $W$ with values in the module $M$. It is
denoted by  $\Ho^k(W;M)$.
Two cocycles which are in the same cohomology class are called
{\it cohomologous}.

\medskip
The trivial module is $\K$ with the Lie action
$x\ldot m=0$, for all $x\in W$ and $m\in \K$.
The second cohomology with values in the trivial module
classifies
equivalence classes of 
central extensions of $W$. 
It is well-known that for the Witt algebra $\W$ we have
$\dim\Ho^2(\W;\K)=1$ and that the class of 
the cocycle defining $\V$  gives a basis.

\medskip
The second cohomology of $W$ with values in the adjoint module,
$\Ho^2(W;W)$, i.e. with module structure
$x\ldot y:=[x,y]$, is the cohomology to be studied here. 
As we will need the formula for explicit calculations later let me
specialize the 2-cocycle condition \refE{2cocgen}
\begin{equation}\label{E:2cob}
\begin{aligned}
\delta_2\psi(x,y,z)&:=
0=\psi([x,y],z)+
\psi([y,z],x)+
\psi([z,x],y)
\\&\qquad
-[x,\psi(y,z)]
+[y,\psi(x,z)]
-[z,\psi(x,y)].
\end{aligned}
\end{equation}
A 2-cocycle will be a coboundary if  
it lies in the image of the (1-)coboundary operator, i.e. 
there exists a linear map $\phi:W\to W$ such that
\begin{equation}\label{E:1cob}
\psi(x,y)=(\delta_1\phi)(x,y):=
\phi([x,y])-[\phi(x),y]-[x,\phi(y)].
\end{equation}
 
\bigskip

The second cohomology $\Ho^2(W,W)$ is  related to the
deformations of the Lie algebra $W$.
\begin{enumerate}
\item
$\Ho^2(W,W)$ classifies infinitesimal deformations of $W$ 
up to equivalence (Gerstenhaber \cite{Ger}).
\label{it1}
\item
If $\dim \Ho^2(W,W)<\infty$ then 
there exists a versal formal  family for the 
formal deformations of $W$  whose base  is formally embedded 
into $\Ho^2(W,W)$. This is due to Fialowski \cite{Fiaproc}, and
Fialowski and Fuks \cite{FiaFuMini}.
\end{enumerate} 
A Lie algebra $W$ is called \emph{rigid} if every deformation
is locally equivalent to the trivial family.
Hence, 
if $\Ho^2(W,W)=0$ then $W$ 
is infinitesimally and formally rigid.
See \cite{FiaSchlaffo} for more information on the connection 
between cohomology and deformations.

\bigskip
After having recalled the general definitions, I formulate the
main result of this article.
\begin{theorem}\label{T:main}
Both the second cohomology of the Witt algebra $W$ 
and of the Virasoro algebra $\V$ (over a field $\K$ with $\chara(\K)=0$) 
with values
in the adjoint module vanishes, i.e.
\begin{equation}\label{E:main}
\Ho^2(\W;\W)=\{0\}, \qquad \Ho^2(\V;\V)=\{0\}.
\end{equation}
\end{theorem}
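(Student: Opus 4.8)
The plan is to exploit the $\Z$-grading of $\W$ and $\V$, and above all \refE{eigen}: the grading is precisely the eigenspace decomposition of $\mathrm{ad}(e_0)$. The first move is to push the entire computation down to degree zero. The cochain spaces $C^k(\W;\W)$ carry an induced $\Z$-grading (a cochain $\psi$ is homogeneous of degree $d$ when $\psi(\W_{n_1},\dots,\W_{n_k})\subseteq\W_{n_1+\dots+n_k+d}$), and $\delta_1,\delta_2$ preserve it. A one-line computation from \refE{eigen} shows that the Lie derivative $L_{e_0}$ on cochains multiplies a homogeneous cochain of degree $d$ by $d$, while Cartan's formula $L_{e_0}=\delta\iota_{e_0}+\iota_{e_0}\delta$ shows $L_{e_0}$ sends cocycles to coboundaries; hence for $d\ne0$ a homogeneous $2$-cocycle $\psi_d$ equals $\tfrac1d\,\delta_1(\iota_{e_0}\psi_d)$, with $(\iota_{e_0}\psi_d)(x)=\psi_d(e_0,x)$. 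A general $2$-cocycle $\psi$ need not be homogeneous — we are doing algebraic, not continuous, cohomology — but $\psi(e_n,e_m)\in\W$ is always a \emph{finite} sum of homogeneous vectors, so its homogeneous components $\psi_d$ are well-defined cochains, each is again a cocycle, and $\phi:=\sum_{d\ne0}\tfrac1d\,\iota_{e_0}\psi_d$ is a genuine linear map (a finite sum on each $e_n$) with $\delta_1\phi=\psi-\psi_0$, since $\delta_1$ commutes with this locally finite sum. Thus every class in $\Ho^2(\W;\W)$ is represented by a cocycle homogeneous of degree $0$; this is \refS{witt}.

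The degree-zero case, \refS{zero}, is where the real work lies. A homogeneous degree-$0$ $2$-cochain is $\psi(e_n,e_m)=c_{n,m}\,e_{n+m}$ with $c_{n,m}=-c_{m,n}\in\K$, and by \refE{1cob} a degree-$0$ coboundary is exactly one with $c_{n,m}=(m-n)(b_{n+m}-b_n-b_m)$ for some sequence $(b_n)\subset\K$. Substituting $\psi$ into \refE{2cob} and cancelling the common generator $e_{n+m+l}$ turns the cocycle condition into a scalar recursion among the $c_{n,m}$. The first reduction is the specialisation $n=0$, which collapses to the Cauchy equation $c_{0,m+l}=c_{0,m}+c_{0,l}$; this forces $c_{0,n}=n\gamma$ for a constant $\gamma$, and after subtracting an explicit coboundary one may assume $c_{0,n}=0$. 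I would then use the remaining relations — in particular those involving the $\mathfrak{sl}_2$-subalgebra $\langle e_{-1},e_0,e_1\rangle$ of \refE{perfect} — to propagate the values: the scalar cocycle identity expresses each $c_{n,m}$ in terms of finitely many low-index data, and a careful but entirely elementary bookkeeping produces a sequence $(b_n)$ realising $\psi$ as a coboundary. I expect this last step to be the main obstacle — organizing the recursion so that the $b_n$ are consistently defined and no obstruction survives; everything around it is formal.

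Finally I would deduce $\Ho^2(\V;\V)=\{0\}$ from the Witt case through the central extension \refE{cext} (\refS{vira}). Given a $2$-cocycle $\Psi$ of $\V$ with values in $\V$, inserting the central element $t$ into \refE{2cob} (and using $[\V,t]=0$) shows $x\mapsto\Psi(x,t)$ is a derivation of $\V$; since every derivation of $\V$ is inner — which the same grading argument yields, i.e. $\Ho^1(\V;\V)=\{0\}$ — a coboundary correction reduces to the case $\Psi(\,\cdot\,,t)=0$. Using the vector-space splitting $\V=s(\W)\oplus\K t$ with $s(e_n)=\hat e_n$, write $\Psi(\hat x,\hat y)=s(\Psi_0(x,y))+\lambda(x,y)\,t$; pushing the cocycle condition forward along $\nu$ (the projection of \refE{cext}) shows $\Psi_0$ is a $2$-cocycle of $\W$ with values in $\W$, so by the previous step $\Psi_0=\delta_1\phi_0$, and subtracting the coboundary of the lift of $\phi_0$ that kills $t$ brings $\Psi$ to the form $\mu\,t$ with $\mu$ a scalar $2$-cocycle of $\W$. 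Since $\dim\Ho^2(\W;\K)=1$ with the Virasoro cocycle $\omega$ a generator, $\mu=\al\,\omega+\delta_1\rho$, and both $\omega\,t$ and $(\delta_1\rho)\,t$ are visibly coboundaries in $C^*(\V;\V)$ — realised by $t\mapsto\al\,t$ (zero on $s(\W)$) and by $s(x)\mapsto\rho(x)\,t$ respectively. Hence $\Psi$ is a coboundary, $\Ho^2(\V;\V)=\{0\}$, and \refT{main} is proved.
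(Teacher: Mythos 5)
Your degree reduction is sound and is in substance the paper's own argument (the Cartan-formula phrasing with $\phi=\tfrac1d\,\iota_{e_0}\psi_d$ is exactly the paper's cohomological change $\phi(x)=\psi(x,e_0)/d$, and your handling of the locally finite sum over $d$ is correct). The Virasoro step is also a workable variant: the paper instead normalizes $\psi(\cdot,t)$ by a direct coboundary correction and then proves $\Ho^2(\V;\K)=0$ by hand, whereas you lean on $\Ho^1(\V;\V)=0$ and on $\dim\Ho^2(\W;\K)=1$, both of which you assert rather than prove; the second is quoted as well known in the paper too, but the first would need its own (short) degree-zero computation. These are minor. The genuine gap is in the middle: for the degree-zero Witt computation you write down the scalar cocycle identity, normalize $c_{0,n}=0$, and then say that ``a careful but entirely elementary bookkeeping produces a sequence $(b_n)$'' while conceding that you ``expect this last step to be the main obstacle.'' That step is not formal, and it is the entire content of the theorem. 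The worry you yourself raise --- that an obstruction might survive the recursion --- is a real one: in the exactly parallel degree-zero computation with trivial coefficients an obstruction \emph{does} survive, namely the one-dimensional Virasoro class in $\Ho^2(\W;\K)$. So ``no obstruction survives'' for adjoint coefficients is precisely what must be proved, and cannot be inferred from the shape of the recursion.

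Concretely, the paper's Lemma~\ref{L:vanish} shows what the bookkeeping actually involves: one first uses a stronger normalization than yours ($\psi_{i,1}=0$ for \emph{all} $i$, plus $\psi_{-1,2}=0$, which costs the full freedom in the coboundary, not just the $c_{0,n}$ part), then kills the levels $m=0,-1,-2,2$ one by one. Even then the levels $m=\pm2$ leave a one-parameter ambiguity $\psi_{2,-2}=-\psi_{3,-2}$ that is only eliminated by evaluating the cocycle identity at the specific triple $(e_2,e_{-2},e_4)$; only after that does a clean two-sided induction in $|m|$ close the argument. Your proposal contains no candidate for this consistency check, and the $\mathfrak{sl}_2$-triple $\langle e_{-1},e_0,e_1\rangle$ alone does not suffice to propagate to all levels. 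Until you carry out this step explicitly, the Witt half of Theorem~\ref{T:main} --- and with it the Virasoro half, which you correctly reduce to it --- is not established.
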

\begin{corollary}\label{C:rigid}
Both $\W$ and $\V$ are 
formally and infinitesimally 
rigid.
\end{corollary}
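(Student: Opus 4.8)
The plan is to deduce the corollary directly from \refT{main} together with the two general facts relating $\Ho^2(W,W)$ to deformations that were recalled in \refS{cohomo}: the classification of infinitesimal deformations up to equivalence (Gerstenhaber), and the existence of a versal formal family whose base is formally embedded into $\Ho^2(W,W)$ whenever $\dim\Ho^2(W,W)<\infty$ (Fialowski, and Fialowski--Fuks). No further computation of cohomology is required; the full content of \refT{main} is imported as a black box, and it suffices to record the argument once for a general $W$ with $\Ho^2(W,W)=\{0\}$ and then specialize to $W=\W$ and $W=\V$.

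First I would establish infinitesimal rigidity. Since $\Ho^2(W,W)$ classifies the infinitesimal deformations of $W$ up to equivalence, and \refT{main} gives $\Ho^2(\W;\W)=\{0\}$ and $\Ho^2(\V;\V)=\{0\}$, every infinitesimal deformation of either algebra is equivalent to the trivial one. This is precisely the assertion that $\W$ and $\V$ are infinitesimally rigid. Next I would treat formal rigidity. Because $\dim\Ho^2(\W;\W)=\dim\Ho^2(\V;\V)=0<\infty$, the cited versality result applies and produces a versal formal family whose base is formally embedded into $\Ho^2(W,W)$. As this target is zero, the base collapses to a single point and the versal family carries only the trivial fibre. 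By versality every formal deformation arises by pull-back from this family and is therefore locally equivalent to the trivial family, which is exactly the definition of formal rigidity stated above.

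I do not expect a genuine obstacle, since the corollary is a formal consequence of structural theorems already quoted in the text. The only point needing a little care is the matching of hypotheses in the formal case: the versality statement requires $\dim\Ho^2<\infty$, which holds trivially here because the dimension is $0$, and one must read off from that statement that a zero-dimensional base forces local equivalence to the trivial family. The Witt and Virasoro cases are handled identically, the latter being covered by the second vanishing assertion of \refT{main}, so both conclusions of \refC{rigid} follow at once.
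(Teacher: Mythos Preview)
Your proposal is correct and follows exactly the route the paper indicates: the text states just before \refT{main} that ``if $\Ho^2(W,W)=0$ then $W$ is infinitesimally and formally rigid,'' invoking precisely the two facts (Gerstenhaber; Fialowski and Fialowski--Fuks) you cite, and \refC{rigid} is recorded as an immediate consequence of \refT{main} with no further argument. Your write-up simply makes this deduction explicit.
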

%%%%%%%%%%%%%%%%%%%%%%%%%%%%%%%%%%%%%%%%%%%%%%%%%%%%%%%%%%
\noindent
I refer to \refS{intro}, the Introduction, for
the history of this theorem. Here I only like to repeat that
the first proof (at least in the Witt case) 
was given by Alice Fialowski by very cumbersome 
unpublished calculations 
\cite{Fianote}
\footnote{Note added after publication of the current article:
In the meantime they are published, see \cite{Fianote} for details}. 
The proof which I present in the following is elementary, but much
more accessible and elegant as the original proof.

%%%%%%%%%%%%%%%%%%%%%%%%%%%%%%%%%%%%%%%%

\section{The degree decomposition of the cohomology}\label{S:witt}

Let $W$ be an arbitrary $\Z$-graded Lie algebra, i.e.
\begin{equation}
W=\bigoplus_{n\in\Z}W_n.
\end{equation}
Recall that we consider algebraic cohomology, i.e. 
our 2-cochains $\psi\in C^2(W;W)$ are 
arbitrary alternating bilinear maps in
the usual sense, i.e. for all $v,w\in W$ 
the cochain $\psi(v,w)$ will be a finite
linear combination (depending on $v$ and $w$) 
of basis elements in $W$.
We call a $k$-cochain $\psi$ homogeneous of degree $d$ if
there exists a $d\in\Z$ such that for all $i_1,i_2,\ldots,i_k\in\Z$
and homogeneous elements $x_{i_l}\in W$, of $\deg(x_{i_l})={i_l}$, for
$l=1,\ldots, k$ we have that
\begin{equation}
\psi(x_{i_1},x_{i_2},\ldots, x_{i_k})\ \in \ W_n,\quad 
\text{with}\quad n=\sum_{l=1}^k i_l+d.
\end{equation}
The corresponding subspace of degree $d$\; homogeneous k-cochains is
denoted by $C^k_{(d)}(W;W)$.
Every $k$-cochain can be written as a formal
infinite sum 
\begin{equation}
\psi=\sum_{d\in\Z}\psi_{(d)}.
\end{equation}
Note that evaluated for a fixed $k$-tuple  of elements only a finite number of
the summands  will produce values different from zero.

An inspection of 
the coboundary operators 
\refE{2cob} and \refE{1cob} for homogeneous elements $x$, $y$, $z$ 
shows
\begin{proposition}\label{P:41}
The coboundary operators $\delta_k$ are operators of degree zero, i.e.
applied to a $k$-cocycle of degree $d$ they will produce
a $(k+1)$-cocycle also of degree $d$.
\end{proposition}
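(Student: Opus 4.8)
The plan is to verify the statement by a direct inspection on homogeneous elements. Since $\delta_k$ is linear, the homogeneous subspaces $W_n$ span $W$, and every $k$-cochain is a formal sum $\psi=\sum_{d\in\Z}\psi_{(d)}$ of homogeneous pieces of which only finitely many contribute on any fixed tuple of arguments, it suffices to fix $d\in\Z$, take $\psi\in C^k_{(d)}(W;W)$, and show that $\delta_k\psi\in C^{k+1}_{(d)}(W;W)$. For this article only the cases $k=1,2$ are needed, where $\delta_k$ is given explicitly by the formulas \refE{1cob} and \refE{2cob}.

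First I would record the one structural fact that drives everything: the Lie bracket of $W$ is itself degree-preserving, $[W_a,W_b]\subseteq W_{a+b}$. Now fix homogeneous arguments, say $x\in W_p$, $y\in W_q$ (and $z\in W_r$ in the case $k=2$), and run through the summands of $\delta_k\psi$ one by one. Applying this to \refE{1cob} with the linear map there equal to our $\psi$ of degree $d$: the term $\psi([x,y])$ lies in $W_{(p+q)+d}$ because $[x,y]\in W_{p+q}$, while $[\psi(x),y]\in W_{(p+d)+q}$ and $[x,\psi(y)]\in W_{p+(q+d)}$; all three of these equal $W_{p+q+d}$. For \refE{2cob}, every ``bracket term'' such as $\psi([x,y],z)$ lies in $W_{(p+q)+r+d}=W_{p+q+r+d}$, since the inner bracket carries the two degrees and $\psi$ adds the shift $d$, and every ``action term'' such as $[x,\psi(y,z)]$ lies in $W_{p+((q+r)+d)}=W_{p+q+r+d}$, since $\psi(y,z)\in W_{q+r+d}$ and bracketing with $x\in W_p$ adds $p$; here a term whose inner bracket vanishes simply contributes nothing.

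Hence every summand of $(\delta_k\psi)(x,y,\dots)$ lies in the single homogeneous subspace $W_{p+q+\cdots+d}$, and therefore so does their sum. Since the homogeneous arguments were arbitrary, this is precisely the assertion that $\delta_k\psi$ is homogeneous of degree $d$, i.e.\ $\delta_k\psi\in C^{k+1}_{(d)}(W;W)$. Finally, because $\delta_{k+1}\circ\delta_k=0$ every coboundary is a cocycle, so the statement is equally valid in the form phrased above. I do not anticipate any genuine obstacle: the entire content is the elementary observation that each term of the coboundary operator is built from the bracket and from applying $\psi$, both of which respect the grading, so the only care required is the bookkeeping of degrees and the reduction from arbitrary to homogeneous arguments by (multi)linearity.
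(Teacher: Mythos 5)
Your proof is correct and is exactly the ``inspection of the coboundary operators for homogeneous elements'' that the paper itself invokes without writing out; you have simply supplied the degree bookkeeping explicitly. No gaps.
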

%\begin{proof}
%We will need only $k=1$ and $k=2$ (but it will be true
%in general).
%We start with a such a cochain of degree $d$. 
%If we evaluate the coboundary operators 
%\refE{2cob} and \refE{1cob} for homogeneous elements $x$, $y$, $z$ and
%use that our Lie algebra structures is graded 
%then on the right hand side all terms are either zero or of degree
%$\deg(x)+\deg(y)+\deg(z)+d$, resp. of degree $\deg(x)+\deg(y)+d$. 
%\end{proof}

In the following we will concentrate on $k=2$ or $k=1$.
If 
$\psi=\sum_{d} \psi_{(d)}$ is  a 2-cocycle then
$\delta_2\psi=\sum_{d} \delta_2\psi_{(d)}=0$.
By \refP{41}  $\delta_2\psi_{(d)}$ is either zero or of degree $d$.
As we sum over different degrees 
and the terms cannot cancel if different from zero we obtain that
$\psi$ is cocycle if and only if 
all degree $d$ components $\psi_{(d)}$ will be individually
2-cocycles.
Moreover, if $\psi_{(d)}$ is  2-coboundary, i.e.   
$\psi_{(d)}=\delta_1\phi$ with a 1-cochain $\phi$, then we can find 
another 1-cochain $\phi'$ of degree  $d$
such that $\psi_{(d)}=\delta_1\phi'$.

We summarize as follows.
Every cohomology class $\alpha\in\Ho^2(W;W)$ can be decomposed as
formal sum 
\begin{equation}
\alpha=\sum_{d\in\Z}\alpha_{(d)},
\qquad
\alpha_{(d)}\in \Ho_{(d)}^2(W;W),
\end{equation}
where the latter space consists of 
classes of cocycles of degree $d$ modulo coboundaries of degree
$d$.

\medskip
For the rest of this section 
let $W$  be either $\W$ or $\V$ and $d\ne 0$.
We will show that for these algebras 
the cohomology spaces of degree $d\ne 0$ will
vanish.
The degree zero case needs a more involved treatment and will
be done in the next sections.
We start with a cocycle of degree $d\ne 0$ and make first a
cohomological change $\psi'=\psi-\delta_1\phi$ with
\begin{equation} 
\phi: W\to W,\quad x\mapsto \phi(x)=\frac {\psi(x,e_0)}{d}.
\end{equation}
Recall $e_0$ is the element of either $\W$ or $\V$ which 
gives the degree decomposition.
This implies (note that by definition $\phi(e_0)=0$)
\begin{equation}\label{E:cocdad}
\begin{aligned}
\psi'(x,e_0)&=\psi(x,e_0)-(\delta_1\phi)(x,e_0)=
\psi(x,e_0)-
\phi([x,e_0])+[\phi(x),e_0]
\\
&=
d\,\phi(x)+\deg(x)\phi(x)-(\deg(x)+d)\phi(x)=
0.
\end{aligned}
\end{equation}
We evaluate \refE{2cob} for the cocycle $\psi'$ on the
triple $(x,y,e_0)$ and leave out  the cocycle
values which vanish due to 
\refE{cocdad}:
\begin{equation}\label{E:cocd0}
\begin{aligned}
0&=
\psi'([y,e_0],x)
+\psi'([e_0,x],y)
-[e_0,\psi'(x,y)]
\\
&=
(\deg(y)+\deg(x)-(\deg(x)+\deg(y)+d))
\psi'(x,y)=-d\psi'(x,y).
\end{aligned}
\end{equation}
As $d\ne 0$ we obtain $\psi'(x,y)=0$ for all $x,y\in W$.
We conclude
\begin{proposition}\label{P:reduce}
The following hold:
\smallskip

(a) \qquad $\Ho_{(d)}^2(\W;\W)=\Ho_{(d)}^2(\V;\V)=\{0\}$,\quad
 for $d\ne 0$. 

\smallskip
(b)  \qquad $\Ho^2(\W;\W)=\Ho_{(0)}^2(\W;\W)$,\quad
\qquad $\Ho^2(\V;\V)=\Ho_{(0)}^2(\V;\V)$.
\end{proposition}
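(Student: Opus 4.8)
The plan is to establish part (a) and then read off (b) from the degree decomposition of cohomology classes already set up before the proposition. For (a) I fix $d\ne 0$, take $W$ to be $\W$ or $\V$, and start from an arbitrary homogeneous $2$-cocycle $\psi$ of degree $d$. The idea is to kill, by a cohomological change, all cocycle values of the form $\psi(x,e_0)$: since $\chara(\K)=0$ I may define the $1$-cochain $\phi(x):=\psi(x,e_0)/d$, which is homogeneous of degree $d$ because $\psi(x,e_0)\in W_{\deg(x)+d}$ for homogeneous $x$, and which satisfies $\phi(e_0)=\psi(e_0,e_0)/d=0$. Passing to $\psi':=\psi-\delta_1\phi$ and using the eigenvalue relation \refE{eigen}, a short computation (this is exactly \refE{cocdad}) gives $\psi'(x,e_0)=0$ for all $x$.

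Next I evaluate the cocycle identity \refE{2cob} for $\psi'$ on the triple $(x,y,e_0)$ with $x,y$ homogeneous. All three terms involving a value $\psi'(\,\cdot\,,e_0)$ vanish by the previous step, and the surviving terms are $\psi'([y,e_0],x)=\deg(y)\,\psi'(x,y)$, $\psi'([e_0,x],y)=\deg(x)\,\psi'(x,y)$ and $-[e_0,\psi'(x,y)]=-(\deg(x)+\deg(y)+d)\,\psi'(x,y)$ by \refE{eigen}. Their sum equals $-d\,\psi'(x,y)$, so $\psi'\equiv 0$ and $\psi=\delta_1\phi$ is a coboundary. As $\psi$ was an arbitrary degree-$d$ cocycle, $\Ho^2_{(d)}(W;W)=\{0\}$, which is (a). For (b) I use the facts noted above: a $2$-cochain is a cocycle iff each homogeneous component is one, and a degree-$d$ coboundary is $\delta_1$ of a degree-$d$ cochain; hence every class $\alpha\in\Ho^2(W;W)$ is a formal sum $\sum_{d}\alpha_{(d)}$ with $\alpha_{(d)}\in\Ho^2_{(d)}(W;W)$, and by (a) only the $d=0$ term can be nonzero, giving $\Ho^2(W;W)=\Ho^2_{(0)}(W;W)$.

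I do not expect a serious obstacle here; the work is essentially bookkeeping. The two points to watch are that the normalizing cochain and all manipulations respect the (possibly infinite) degree expansion — harmless because everything is degreewise finite — and that the $e_0$-eigenvalue argument still applies in the Virasoro case even though $\V_0$ is two-dimensional; for this it is enough that \refE{eigen} continues to hold, that $t$ is central, and that $\phi(e_0)=0$, all of which are available. Crucially, since we work with arbitrary algebraic cochains, no convergence or boundedness condition is needed, which is precisely why this argument is cleaner than the analytic route sketched in the introduction.
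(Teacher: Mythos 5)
Your argument is correct and coincides with the paper's own proof: the same normalizing cochain $\phi(x)=\psi(x,e_0)/d$, the same computation showing $\psi'(x,e_0)=0$, and the same evaluation of the cocycle identity on $(x,y,e_0)$ yielding $-d\,\psi'(x,y)=0$, followed by the same degreewise bookkeeping for part (b). Your added remark that the eigenvalue relation \refE{eigen} persists on the two-dimensional space $\V_0$ because $t$ is central is exactly the point that makes the argument carry over to $\V$.
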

\begin{remark}
In fact the arguments also  work for every $\Z$-graded Lie
algebra
$W$ 
for which 
there exists an element $e_0$ such  that the homogeneous spaces
$W_n$  
are just the eigenspaces of $e_0$ under the adjoint action 
to the eigenvalue $n$. 
Such Lie algebras are called internally graded.
See also  Theorem 1.5.2  in \cite{Fuks}.
\end{remark}

\section{Degree zero for the Witt algebra}\label{S:zero}
%%%%%%%%%%%%%%%%%%%%%
It remains the degree zero part. In this section we 
consider only the Witt algebra.
Recall that  the homogeneous subspaces of 
degree $n$ are one-dimensional and generated by $e_n$.
Hence, a  degree zero cocycle can be written as
$\psi(e_i,e_j)=\psi_{i,j} e_{i+j}$ and if it 
is a coboundary  then it can
be given as a coboundary of a linear form of degree zero:
$\phi(e_i)=\phi_ie_i$. The systems of $\psi_{i,j}$ and $\phi_i$ 
for $i,j\in\Z$ fix $\psi$ and $\phi$ completely.
If we evaluate   \refE{2cob} for the triple $(e_i,e_j,e_k)$  
we get for the coefficients 
\begin{equation}\label{E:cocycgen}
\begin{aligned}
0=&(j-i)\psi_{i+j,k}-(k-i)\psi_{i+k,j}+(k-j)\psi_{j+k,i}
\\
& -(j+k-i)\psi_{j,k}+(i+k-j)\psi_{i,k}
-(i+j-k)\psi_{i,j}.
\end{aligned}
\end{equation}
For the coboundary we obtain
\begin{equation}\label{E:cobound}
(\delta \phi)_{i,j}=(j-i)(\phi_{i+j}-\phi_j-\phi_i).
\end{equation}
Hence,  $\psi$ is a coboundary if and only if there exists a system of
$\phi_k\in\K$, $k\in\Z$ such that
\begin{equation}\label{E:coboundc}
\psi_{i,j}=(j-i)(\phi_{i+j}-\phi_j-\phi_i),\quad \forall i,j\in\Z.
\end{equation}
\noindent
A degree zero 1-cochain $\phi$ will be  a 1-cocycle  
(i.e. $\delta_1 \phi=0$)
if and only if 
\begin{equation}
\phi_{i+j}-\phi_j-\phi_i=0.
\end{equation} 
This has the solution $\phi_i=i\,\phi_1, \forall i\in\Z$.
Hence, 
given a $\phi$ we can always find a $\phi'$ with   $(\phi')_1=0$
and  $\delta_1 \phi=\delta_1 \phi'$.
In the following we will always choose such a $\phi'$ for our
2-coboundaries.

{\bf  Step 1:} We make a cohomological change

\noindent
We start with a  2-cocycle $\psi$ given by the system of $\psi_{i,j}$ 
and will modify it by adding a coboundary $\delta_1\phi$ with 
suitable $\phi$ to obtain $\psi'=\psi-\delta_1\phi$.
We will determine $\phi$  inspired by the intended relation \refE{coboundc}
\begin{equation}\label{E:c1}
\psi_{i,1}=(1-i)(\phi_{i+1}-\phi_1-\phi_i)= (1-i)(\phi_{i+1}-\phi_i).
\end{equation} 
Note that we could put $\phi_1=0$ by our normalization.

\noindent
(a) Starting from $\phi_0:=-\psi_{0,1}$ we set in descending order
for $i\le -1$
\begin{equation}
\phi_i:=\phi_{i+1}-\frac 1{1-i}\,\psi_{i,1}.
\end{equation}

\noindent
(b) $\phi_2$ cannot be fixed by \refE{c1}, instead we use
\refE{coboundc}
\begin{equation}
\psi_{-1,2}=3(-\phi_2-\phi_{-1}), \quad\text{yielding}\quad
\phi_2:=-\phi_{-1}-\frac 13\psi_{-1,2}.
\end{equation}
Then we have ${\psi'}_{-1,2}=0$.

\noindent
(c) We use again \refE{c1} to calculate recursively
in ascending order $\phi_i,\  i\ge 3$
by
\begin{equation}
\phi_{i+1}:=\phi_{i}+\frac 1{1-i}\,\psi_{i,1}.
\end{equation}
For the cohomologous cocycle $\psi'$ we obtain by  construction
\begin{equation}
\psi'_{i,1}=0,\quad\forall i\in\Z, \quad\text{and}
\quad \psi'_{-1,2}=\psi'_{2,-1}=0.
\end{equation}

\bigskip
{\bf Step2:} Show that $\psi'$ is identical zero.

\noindent
To avoid cumbersome notation we will denote the cohomologous
cocycle by $\psi$.
\begin{lemma}\label{L:vanish}
Let $\psi$ be a 2-cocycle of degree zero such that
$\psi_{i,1}=0,\forall i\in\Z$ and $\psi_{-1,2}=0$, then
$\psi$ will be identical zero.
\end{lemma}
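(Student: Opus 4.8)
The plan is to exploit the cocycle identity \refE{cocycgen} for carefully chosen triples $(e_i,e_j,e_k)$ to propagate the two normalizations $\psi_{i,1}=0$ and $\psi_{-1,2}=0$ to all of $\psi$. First I would record the obvious symmetry $\psi_{i,j}=-\psi_{j,i}$ (alternating), so in particular $\psi_{1,i}=0$ and $\psi_{i,i}=0$ for all $i$, and also $\psi_{2,-1}=0$. The natural strategy is a double induction on $|i|+|j|$, or better, to first pin down $\psi_{i,-1}$ (equivalently $\psi_{-1,i}$) for all $i$, then $\psi_{i,2}$, and then bootstrap to general $\psi_{i,j}$. For the first family, I would plug triples of the form $(e_i,e_1,e_{-1})$ into \refE{cocycgen}: since $\psi_{i,1}=\psi_{i+1,-1}\cdot(\text{stuff})$ terms involving a ``$1$'' slot vanish, the identity collapses to a two-term relation expressing $\psi_{i-1,-1}$ (say) in terms of $\psi_{i,-1}$, giving a recursion that, seeded by $\psi_{-1,-1}=0$ and $\psi_{0,-1}=\psi_{0,-1}$—wait, $\psi_{0,-1}$ is itself $-\psi_{-1,0}$, and degree-zero cochains have $\psi(e_i,e_0)=\psi_{i,0}e_i$, but there is no constraint forcing $\psi_{i,0}=0$ a priori—so I would handle the ``$e_0$ slot'' separately, noting that evaluating \refE{cocycgen} on $(e_i,e_j,e_0)$ gives a clean relation among $\psi_{i,0},\psi_{j,0},\psi_{i,j}$ that ties the ``$0$-column'' to the rest.

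Concretely: the triple $(e_i,e_j,e_0)$ in \refE{cocycgen} yields, after using $\psi_{i,0}=\psi(e_i,e_0)$ coefficients, a relation of the shape $(j-i)\psi_{i+j,0}=(\text{linear in }\psi_{i,0},\psi_{j,0},\psi_{i,j})$; combined with the recursion from the ``$1$-slot'' triples this should force $\psi_{i,0}=0$ for all $i$ once we know $\psi_{1,0}=0$ (which holds since $\psi_{1,0}=-\psi_{0,1}$ and $\psi_{0,1}=0$ by hypothesis). Having $\psi_{i,0}=0$ in hand, I return to the triples $(e_i,e_1,e_{-1})$: the vanishing of all ``$1$-slot'' and ``$0$-slot'' values makes \refE{cocycgen} reduce to a first-order recursion in $i$ for $\psi_{i,-1}$, and since one value in the recursion is already $0$ (namely $\psi_{2,-1}=0$, or $\psi_{-1,-1}=0$), solving the recursion gives $\psi_{i,-1}=0$ for all $i\in\Z$. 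Then the triples $(e_i,e_2,e_{-1})$ (now with the ``$-1$-slot'' values known to vanish) give a recursion for $\psi_{i,2}$ seeded by $\psi_{-1,2}=0$ and $\psi_{1,2}=0$, forcing $\psi_{i,2}=0$ for all $i$.

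Finally, to get general $\psi_{i,j}$, I would run an induction on $\min(|i|,|j|)$ or on $j$: given that $\psi_{*,1}=\psi_{*,2}=\psi_{*,-1}=\psi_{*,0}=0$ and, inductively, $\psi_{*,m}=0$ for all $m$ with $|m|<|j|$, plug the triple $(e_{i},e_{j-1},e_{1})$ (or $(e_i,e_{j-2},e_2)$) into \refE{cocycgen}; most terms vanish by the inductive hypothesis and the remaining relation expresses $\psi_{i,j}$ in terms of already-vanishing coefficients, closing the induction. The main obstacle I anticipate is bookkeeping: making sure the recursions are genuinely solvable (the coefficients $(j-i)$, $(k-i)$, $(k-j)$ appearing as denominators must be nonzero for the chosen triples, which is exactly why one picks $(e_i,e_1,e_{-1})$ with $i\notin\{1,-1\}$, etc.) and checking the finitely many low-index base cases by hand. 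Because $\chara(\K)=0$, dividing by these nonzero integers is legitimate, and no step requires anything beyond the single identity \refE{cocycgen} together with the alternating property.
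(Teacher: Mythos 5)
Your plan follows the paper's strategy quite closely: kill the $0$-column, then the levels $\pm1$, then level $2$, then induct outward on the level using triples with a $\pm1$ slot. Most of it is sound, including the points you flag as bookkeeping. But there is one genuine gap, at level $\pm2$, and it happens to be the crux of the whole computation. The triple $(e_i,e_{-1},e_2)$ in \refE{cocycgen}, after discarding the levels $0,\pm1$ that are already known to vanish, yields
\[
(i+1)\,\psi_{i-1,2}=(i+3)\,\psi_{i,2}.
\]
This recursion degenerates twice: at $i=-1$ the left coefficient vanishes, so nothing propagates from the seed $\psi_{-1,2}=0$ down to $\psi_{-2,2}$; and at $i=-3$ the right coefficient vanishes, so the descending chain restarts with $\psi_{-4,2}=0$. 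What survives is $\psi_{-3,2}=-\psi_{-2,2}$ with $\psi_{-2,2}$ (equivalently $\psi_{2,-2}$) completely undetermined, and no further triple containing a slot from $\{0,\pm1\}$ pins it down (e.g.\ $(e_1,e_{-2},e_2)$ only reproduces $\psi_{3,-2}=-\psi_{2,-2}$). So your assertion that this recursion ``forces $\psi_{i,2}=0$ for all $i$'' is false as stated, and the obstruction is not the denominator-checking you anticipated: it needs a genuinely new relation. The paper supplies it by evaluating \refE{cocycgen} on the triple $(e_2,e_{-2},e_4)$, which collapses to $4\psi_{2,-2}=0$ once the outer parts of levels $\pm2$ and level $0$ are known. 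Note that the antidiagonal coefficients $\psi_{i,-i}$ (the ones multiplying $e_0$) are exactly where a nontrivial class could hide, so this extra step cannot be skipped.

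The remainder of your outline is correct and matches the paper. The triples $(e_i,e_j,e_0)$ do reduce to the additivity relation $(j-i)(\psi_{i+j,0}-\psi_{i,0}-\psi_{j,0})=0$, which together with $\psi_{1,0}=0$ kills the whole $0$-column (you need one instance with $j=-1$, say $(i,j)=(2,-1)$, to reach $\psi_{2,0}$, since $j=1$ alone only shows $\psi_{i,0}$ is constant for $i\ge2$). The level $-1$ recursion from $(e_i,e_1,e_{-1})$ likewise fails to determine $\psi_{2,-1}$, but there the hypothesis $\psi_{-1,2}=0$ is exactly what saves you, as you correctly note. And the outward induction closes because the coefficient of the new level in the triples $(e_i,e_{\pm1},e_k)$ is $k\mp1$, which is nonzero for $|k|\ge2$ in the relevant direction.
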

Before we proof the lemma we use it to show
\begin{proof} (Witt part of \refT{main}.)
By \refP{reduce} it is enough to consider  degree zero cohomology.
By the cohomological change done in Step 1
every degree zero cocycle $\psi$ is cohomologous to
a cocycle fulfilling the conditions of \refL{vanish}. 
But such a cocycle vanishes by the lemma. Hence, the original cocycle 
$\psi$ is cohomological trivial. 
\end{proof}

\begin{proof} (\refL{vanish}.)
First, we note two special cases of \refE{cocycgen}
which will be useful in the following.
For the index triple $(i,-1,k)$ we obtain
\begin{equation}\label{E:cockm1}
\begin{aligned}
0=&-(i+1)\psi_{i-1,k}-(k-i)\psi_{i+k,-1}+(k+1)\psi_{k-1,i}
\\
& -(-1+k-i)\psi_{-1,k}+(i+k+1)\psi_{i,k}
-(i-1-k)\psi_{i,-1}.
\end{aligned}
\end{equation}
For the triple $(i,1,k)$, and ignoring terms of the type $\psi_{i,1}$ which
 are zero by assumption, we obtain
\begin{equation}\label{E:cock1}
0=(1-i)\psi_{i+1,k}+(k-1)\psi_{k+1,i}
+(i+k-1)\psi_{i,k}.
\end{equation}
We will consider $\psi_{i,m}$ for certain values of $|m|\le 2$ and finally 
make ascending and descending induction on $m$. 
We will call the coefficient $\psi_{i,m}$ coefficients of level $m$
(and of level $i$ by  antisymmetry).
By assumption the cocycle values of level 1 are all zero. 

\smallskip
\noindent
$\boxed{m=0}$
\smallskip

\noindent
By the antisymmetry we have $\psi_{1,i}=-\psi_{i,1}=0$.
In \refE{cock1} we consider $k=0$
this gives
\begin{equation}\label{E:c0}
0=(1-i)\left(\psi_{i+1,0}-\psi_{i,0}\right).
\end{equation}
Starting from $\psi_{1,0}=0$ this implies for 
$i\le 0$ that $\psi_{i,0}=0$
and for $i\ge 3$ that $\psi_{i,0}=\psi_{2,0}$.
Next we consider \refE{cockm1} for $k=2, i=0$ and obtain
\begin{equation}
0=-\psi_{-1,2}-2\psi_{2,-1}+3\psi_{1,0}
-\psi_{-1,2}+3\psi_{0,2}+3\psi_{0,-1}.
\end{equation}
The $\psi_{-1,2}$ terms cancel and we know already $\psi_{1,0}=\psi_{-1,0}=0$,
hence $\psi_{2,0}=0$. 
This implies
\begin{equation}
\psi_{i,0}=0\quad \forall i\in\Z.
\end{equation}

\smallskip
\noindent
$\boxed{m=-1}$
\smallskip

\noindent
In \refE{cock1} we set $k=-1$ and obtain 
(with $\psi_{0,i}=0$)
\begin{equation}
-(i-1)\,\psi_{i+1,-1}+(i-2)\,\psi_{i,-1}=0.
\end{equation}
Hence,
\begin{equation}
\psi_{i,-1}=\dfrac {i-1}{i-2}\;\psi_{i+1,-1},\quad \text{for}\ i\ne 2,
\qquad
\psi_{i+1,-1}=\dfrac {i-2}{i-1}\;\psi_{i,-1},\quad
\text{for}\ i\ne 1.
\end{equation}
The first formula implies 
starting from 
$\psi_{1,-1}=-\psi_{-1,1}=0$  that
$\psi_{i,-1}=0$, for all $i\le 1$.
The second formula for $i=2$ implies $\psi_{3,-1}=0$ and hence
$\psi_{i,-1}=0$  for $i\ge 3$.
But by assumption $\psi_{2,-1}=\psi_{-1,2}=0$.
Hence,
\begin{equation}
\psi_{i,-1}=0\quad \forall i\in\Z.
\end{equation}

\smallskip
\noindent
$\boxed{m=-2}$
\smallskip

\noindent
We plug the value $k=-2$ into \refE{cock1} and get for the
terms not yet identified as zero
\begin{equation}
(1-i)\psi_{i+1,-2}+(i-3)\psi_{i,-2}=0.
\end{equation}
This yields 
\begin{equation}
\psi_{i+1,-2}=\dfrac {i-3}{i-1}\;\psi_{i,-2},\quad \text{for}\ i\ne 1,
\qquad
\psi_{i,-2}=\dfrac {i-1}{i-3}\;\psi_{i+1,-2},\quad \text{for}\ i\ne 3.
\end{equation}
From the first formula we get
$\psi_{3,-2}=-\psi_{2,-2}$, 
$\psi_{4,-2}=0\cdot\psi_{3,-2}$, and hence
$\psi_{i,-2}=0$, for all $i\ge 4$.
\newline
From the second formula we get starting from $\psi_{1,-2}=0$ that
$\psi_{i,-2}=0$ for $i\le 1$.
\newline
Altogether, $\psi_{i,-2}=0$ for $i\ne 2,3$. The value of
 $\psi_{2,-2}=-\psi_{3,-2}$ stays undetermined for the moment.

\smallskip
\noindent
$\boxed{m=2}$
\smallskip

\noindent
We start from \refE{cockm1} for $k=2$ and recall that terms of
levels 0, 1, -1 are zero. This gives
\begin{equation}
-(i+1)\,\psi_{i-1,2}+(i+3)\,\psi_{i,2}=0.
\end{equation}
Hence,
\begin{equation}
\psi_{i,2}=\dfrac {i+1}{i+3}\;\psi_{i-1,2},
\quad \text{for}\ i\ne -3,
\qquad
\psi_{i-1,2}=\dfrac {i+3}{i+1}\;\psi_{i,2},\quad \text{for}\ i\ne -1.
\end{equation}
From the first formula we start from $\psi_{-1,2}=0$ and get 
$\psi_{i,2}=0$, $\forall i\ge -1$. From the second we get
$\psi_{-3,2}=-\psi_{-2,2}$, then $\psi_{-4,2}=0$ and then altogether
$\psi_{i,2}=0$ for all $i\ne -2,-3$. 
The value
$\psi_{-3,2}=-\psi_{-2,2}$ stays undetermined for the moment.

To find it 
we consider the index triple $(2,-2,4)$ in 
\refE{cocycgen} and obtain after leaving out terms which are 
obviously zero
\begin{equation}
0=-2\,\psi_{6,-2}-8\,\psi_{4,2}+4\,\psi_{2,-2}.
\end{equation}
From the level $m=2$ discussion we get $\psi_{4,2}=0$, from
$m=-2$ we get $\psi_{6,-2}=0$. This shows that 
$\psi_{2,-2}=\psi_{3,-2}=\psi_{-3,2}=0$ and we can conclude
\begin{equation}
\psi_{i,-2}=\psi_{i,2}=0,\qquad\forall i\in\Z.
\end{equation}

\smallskip
\noindent
$\boxed{m<-2}$
\smallskip

\noindent
We make induction assuming it is true for $m=2,1,0,-1,-2$.
We start from \refE{cockm1} for $k$ and put the $k-1$ level 
element on the l.h.s.
By this
\begin{equation}
(k+1)\psi_{i,k-1}=\text{terms of level $k$ and $-1$}.
\end{equation}
By induction the terms on the r.h.s. are zero. 
Note that in this region $k<-1$, hence $k+1\ne 0$,
and $\psi_{i,k-1}=0$, too.

\smallskip
\noindent
$\boxed{m>2}$
\smallskip

\noindent
Again we make induction.
Starting from 
\refE{cock1} we get
\begin{equation}
(k-1)\,\psi_{i,k+1}=(1-i)\,\psi_{i+1,k}+(i+k-1)\,\psi_{i,k}.
\end{equation}
As $k\ge 2$ the value of $1-k\ne 0$ and 
we get by induction trivially the statement for $k+1$.

Altogether we obtain \ $\psi_{i,k}=0,\forall i,k\in\Z$.
\end{proof}
The presented calculation is completely different and much simpler
as in \cite{Fianote} 

\section{Extension to the Virasoro algebra}\label{S:vira}

Here we show in an elementary way that also for the Virasoro algebra
\begin{equation}
\Ho^2(\V,\V)=0. 
\end{equation}
Hence $\V$ is also infinitesimally and
formally rigid.
This shows the Virasoro part of \refT{main}.
It is our intention not to use any higher techniques,
but see  \refR{rem2} at the end of this section.

First recall that by \refP{reduce} it is enough to 
consider degree zero cocycles. 
We start with a degree zero 2-cocycle $\psi:\V\times \V\to \V$  of the
Virasoro algebra.
If we apply the Lie homomorphism $\nu$ we get 
the bilinear map $\nu\circ\psi$ which we restrict  to
$\W\times\W$
\begin{equation}
\psi'=\nu\circ\psi_{|}:\W\times\W\to \W.
\end{equation}
Unfortunately,  in general $\psi'$ will not be a 2-cocycle 
for the Witt algebra. We have to be careful as
the Lie product for $\V$   differs from that 
of $\W$ by multiples of the central element.
But we are allowed to make cohomologous changes.
\begin{proposition}\label{P:restrcohomo}
Given a cocycle $\psi\in C^2(\V,\V)$ there exists a cohomologous one 
$\tilde \psi\in C^2(\V,\V)$
 such that the bilinear map 
$\psi'=\nu\circ\tilde\psi\in 
 C^2(\W,\W)$.
\end{proposition}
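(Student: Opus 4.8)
The plan is to isolate the single source of the failure of $\nu\circ\psi$ to be a Witt cocycle and to remove it by one explicit cohomological change. Identify $\W$ with its image under the canonical linear section $\W\hookrightarrow\V$, and recall that for $x,y\in\W$ the bracket $[x,y]_{\V}$ agrees with $[x,y]_{\W}$ up to a multiple of $t$ prescribed by the defining cocycle $\omega$ of the central extension in \refE{Vstruct}. Put $\mu(x):=\nu\bigl(\psi(t,x)\bigr)$ for $x\in\W$. Evaluating the $2$-cocycle identity \refE{2cob} for $\psi$ on a triple $x,y,z\in\W$, applying the Lie homomorphism $\nu$, and collecting the central corrections, one obtains a formula of the shape
\begin{equation}
\delta_2(\nu\circ\psi)(x,y,z)= -\bigl(\omega(x,y)\mu(z)+\omega(y,z)\mu(x)+\omega(z,x)\mu(y)\bigr),
\end{equation}
so that $\nu\circ\psi$ is a genuine $\W$-cocycle as soon as $\mu\equiv 0$. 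Hence it suffices to replace $\psi$ by a cohomologous cocycle $\tilde\psi$ with $\tilde\psi(t,\cdot)\equiv 0$.

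To arrange this, first observe that $D:=\psi(t,\cdot)\colon\V\to\V$ is a derivation of $\V$: plugging the triple $(t,x,y)$ into \refE{2cob} and using that $t$ is central (so $[t,\cdot]_{\V}=0$ and $\psi(t,t)=0$) collapses the identity to $\psi([x,y]_{\V},t)+[x,\psi(t,y)]_{\V}-[y,\psi(t,x)]_{\V}=0$, which is exactly the Leibniz rule for $D$. Since $\psi$ is homogeneous of degree zero and $\deg t=0$, $D$ is a derivation of degree zero.

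The heart of the argument is then the elementary fact that every degree-zero derivation of $\V$ is inner of the form $c\,\mathrm{ad}(e_0)$, $c\in\K$. Because $\V_n=\K e_n$ for $n\ne0$, homogeneity gives $D(e_n)=d_n e_n$; applying the Leibniz rule to the brackets $[e_n,e_m]_{\V}$ gives $d_{n+m}=d_n+d_m$ whenever $n\neq\pm m$, and a short computation (using a few pairs such as $(2,1)$, $(1,-1)$, $(3,-1)$, $(2,-1)$) forces $d_n=n\,d_1$ for all $n$. The a priori possible $t$-component of $D(e_0)\in\V_0=\langle e_0,t\rangle_{\K}$ is killed by applying $D$ to a bracket $[e_n,e_{-n}]_{\V}$ with non-zero central term, and $D(t)=0$ matches $\mathrm{ad}(c\,e_0)(t)=0$. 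Thus $D=\mathrm{ad}(c\,e_0)$ with $c=d_1$. This is the step that requires some care, though it remains completely elementary.

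Finally, define the $1$-cochain $\phi_0\colon\V\to\V$ by $\phi_0(t):=-c\,e_0$ and $\phi_0(e_n):=0$ for all $n$, and set $\tilde\psi:=\psi-\delta_1\phi_0$, which is cohomologous to $\psi$. For every $x\in\V$, centrality of $t$ gives $(\delta_1\phi_0)(t,x)=-[\phi_0(t),x]_{\V}=[c\,e_0,x]_{\V}=D(x)=\psi(t,x)$, hence $\tilde\psi(t,\cdot)\equiv 0$, so the $\mu$ associated to $\tilde\psi$ vanishes and, by the first paragraph, $\psi'=\nu\circ\tilde\psi$ restricted to $\W\times\W$ is a $2$-cocycle of $\W$ with values in $\W$, which is the assertion. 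The main obstacle is the bookkeeping in the first step (tracking exactly which central terms survive under $\nu$) together with the derivation classification in the third step; both are routine once set up.
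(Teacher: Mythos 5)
Your proof is correct and follows essentially the same route as the paper: both identify $\nu\circ\psi(t,\cdot)$ as the sole obstruction to $\nu\circ\psi$ being a Witt cocycle and remove it by subtracting $\delta_1\phi$ for a $1$-cochain $\phi$ supported on $t$ with value a multiple of $e_0$. Your packaging of the key step as ``$\psi(t,\cdot)$ is a degree-zero derivation, hence a multiple of $\mathrm{ad}(e_0)$'' is a nice conceptual gloss, but it amounts to the same computation as the paper's identity $(m-n)(a_{n+m}-a_m-a_n)=0$ (you additionally kill the central component of $\psi(t,\cdot)$, which the paper does not need since $\nu$ annihilates it anyway).
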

\begin{proof}
Let $x,y,z\in\W$. We have $[x,y]_{\V}=[x,y]_{\W}+\alpha(x,y)\cdot t$.
We consider \refE{2cob} with the bracket $[.,.]_{\V}$ and rewrite
it  in terms of  $[.,.]_{\W}$ (but drop the index) and $\alpha$.
For the second group of terms we use that $\nu$ is
a Lie homomorphism and they will not see the central elements. Only 
in the first group they will play a role.
We get
\begin{equation}\label{E:2cobvv}
\begin{aligned}
0&=
\psi'([x,y],z)+
\psi'([y,z],x)+
\psi'([z,x],y)
\\
&\qquad
+\alpha({[x,y],z})\,\nu\circ\psi(t,z)
+\alpha({[y,z],x})\,\nu\circ\psi(t,x)
+\alpha({[z,x],y})\,\nu\circ\psi(t,y)
\\&\qquad
-[x,\psi'(y,z)]
+[y,\psi'(x,z)]
-[z,\psi'(x,y)].
\end{aligned}
\end{equation}
We will make a cohomological change for the cocycle $\psi$ in $\V$ 
such that $\psi(t,z)$ will have only central terms, i.e. 
$\nu\circ\psi(t,z)$ will vanish.
Restricting it to
$\W\times \W$ and projecting it by $\nu$ 
will define
a 2-cocycle for $\W$.

As $\psi$ is a degree zero cocycle, we have $\psi(e_1,t)=a\, e_1$.
We set $\phi(t):=a \, e_0$, and $\phi(e_n):=0$ for all $n\in\Z$.
Let $\tilde\psi:=\psi-\delta_1\phi$.
We calculate
\begin{equation}
\begin{aligned}
\tilde\psi(e_1,t)=\psi(e_1,t)-(\delta_1\phi)(e_1,t)
&=
a\, e_1 - \phi([e_1,t])+[e_1,\phi(t)]+[\phi(e_1),t]
\\
&=a\,e_1-0-a\,e_1+0=0.
\end{aligned}
\end{equation}
The coefficients $a_n$ (and $b$, but it will not
play any role) are given by  
\begin{equation}
\tilde\psi(e_n,t)=a_ne_n,\ n\ne 0,\qquad 
\tilde\psi(e_0,t)=a_0e_n+b\,\cdot t.
\end{equation}
Note that $a_1=0$. If we evaluate \refE{2cob} for the
triple $(e_n,e_m,t)$ and leave out terms which are  
zero due to the fact, that $t$ is central we get
\begin{equation}\label{E:shift}
0=\tilde\psi([e_n,e_m]_{\V},t)
-[e_n,\tilde\psi(e_m,t)]_{\V}
+[e_m,\tilde\psi(t,e_n)]_{\V}.
\end{equation}
This implies (by evaluating the coefficients at the 
element $e_{m+n}$)
\begin{equation}
(m-n)(a_{n+m}-a_m-a_n)=0.
\end{equation}
If we plug in $m=1$ and use $a_1=0$ we get $a_n=0$ for all $n\le 1$
and
$a_n=a_2$ for all $n\ge2$. Now plugging in $m=2$, $n=-2$ 
we obtain $a_2=-a_{-2}=0$. Hence altogether 
$a_n=0$ for all $n$, and $\nu\circ\tilde\psi(w,t)=0$, $\forall
w\in\V$.
\end{proof} 
Hence, after a cohomologous change we may assume that our 
restricted and 
projected $\psi'$ will be a 2-cocycle for the
algebra $\W$ with values in the adjoint module $\W$. In the last
section we showed that it is cohomologically trivial, i.e. there 
exists a $\phi':\W\to\W$ such that
$\delta_1^{\W}\phi'=\psi'=\nu\circ\psi$.
We denote for a moment the corresponding coboundary operators of the
two Lie algebras by $\delta^{\W}$ and $\delta^{\V}$.
By setting $\phi(t):=0$ we extend $\phi'$ 
to a linear map $\phi:\V\to\V$.
In particular, $\nu\circ\phi=\phi'$ if restricted to
$\W$.

The 2-cocycle $\hat\psi=\psi-\delta_1^{\V}\phi$ will be a
cohomologous  cocycle for $\V$. If we apply 
$\nu$  then
\begin{equation}
\nu\hat\psi=\nu\psi-\nu\delta_1^{\V}\phi=
\psi'-\delta_1^{\W}\phi'=0.
\end{equation}
Hence, $\hat\psi$ takes values in the kernel of $\nu$, i.e.
\begin{equation}
\hat\psi:\V\times \V\to \K\cdot t,
\end{equation}
with $t$ the central element.

\medskip
This implies that 
it is enough to
show that every  class
$\psi:\V\to\V$ with values in the central ideal $\K\cdot t$ will be a
 coboundary 
to show the
vanishing of $\Ho^2(\V,\V)$.
In the following let $\psi$ be of this kind.

\medskip
As $\psi(x,y)=\psi_{x,y}t$ will be central the 2-cocycle condition \refE{2cob} 
will reduce to 
\begin{equation}\label{E:cocred}
(\delta_2\psi)(x,y,z)=
\psi([x,y],z)+
\psi([y,z],x)+
\psi([z,x],y)=0.
\end{equation}
The coboundary condition for $\phi:\V\to \K\cdot t$ reduces to
\begin{equation}\label{E:cocred1}
(\delta_1\phi)(x,y)=\phi([x,y]).
\end{equation}
We can reformulate this as that the component function 
$\psi_{x,y}:\V\times\V\to\K$ is a Lie algebra 2-cocycle for $\V$ with
values in the trivial module.
% see \refR{rem1} below.
\begin{lemma}\label{L:van}
We have
\begin{equation}
\psi(x,t)=0,\quad
\forall x\in\V.
\end{equation}
\end{lemma}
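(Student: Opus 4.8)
The plan is to squeeze everything out of the reduced cocycle identity \refE{cocred} — which is all that survives of \refE{2cob} here, since $\psi$ takes values in the central (hence trivially acted upon) ideal $\K\cdot t$ — combined with the one extra feature of the Virasoro bracket that $[e_m,t]=[t,e_n]=0$ by \refE{Vstruct}. Since $\psi$ is alternating and bilinear and $\V$ is spanned by $t$ together with the $e_n$, $n\in\Z$, it is enough to show $\psi(e_n,t)=0$ for every $n\in\Z$; the remaining value $\psi(t,t)$ vanishes by antisymmetry.

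To obtain this I would simply feed the triple $(e_n,e_m,t)$ into \refE{cocred}. Because $[e_m,t]=[t,e_n]=0$ in $\V$, the two summands $\psi([e_m,t],e_n)$ and $\psi([t,e_n],e_m)$ drop out, leaving $0=\psi([e_n,e_m],t)$. Now $[e_n,e_m]=(m-n)e_{n+m}-\tfrac1{12}(n^3-n)\delta_n^{-m}\,t$ by \refE{Vstruct}, and the central correction term contributes $-\tfrac1{12}(n^3-n)\delta_n^{-m}\,\psi(t,t)=0$; hence
$(m-n)\,\psi(e_{n+m},t)=0$ for all $n,m\in\Z$.

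Finally I would note that every $k\in\Z$ can be written as $k=n+m$ with $m\neq n$ — take $\{n,m\}=\{0,k\}$ when $k\neq0$ and $\{n,m\}=\{-1,1\}$ when $k=0$ — so, dividing by the nonzero integer $m-n$ (recall $\chara(\K)=0$), we get $\psi(e_k,t)=0$ for all $k\in\Z$. By linearity this gives $\psi(x,t)=0$ for all $x\in\V$, which is the lemma. I do not expect any real obstacle here: the only thing to spot is the use of a ``spectator'' generator $e_m$ whose bracket with $e_n$ reproduces the desired $e_k$, and it is worth remarking that, unlike in the earlier reductions, the grading of $\V$ plays no role in this step.
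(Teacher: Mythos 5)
Your proposal is correct and follows essentially the same route as the paper: evaluate the reduced cocycle identity on $(e_n,e_m,t)$, use centrality of $t$ to isolate $(m-n)\psi(e_{n+m},t)=0$, and then choose $\{n,m\}=\{0,k\}$ for $k\neq 0$ and $\{-1,1\}$ for $k=0$. Your explicit remark that the central correction term in $[e_n,e_m]$ contributes only $\psi(t,t)=0$ is a small detail the paper leaves implicit, but the argument is identical.
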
 
\begin{proof}
We evaluate \refE{cocred} for $(e_i,e_j,t)$.
As $t$ is central $[e_j,t]=[t,e_i]=0$, hence only the first term 
in \refE{cocred} 
will
survive and we get
$(j-i)\psi(e_{i+j},t)=0$. Choosing $j=0$ we obtain
$\psi(e_i,t)=0$ for $i\ne 0$, choosing $i=-1, j=+1$ we obtain also 
$\psi(e_0,t)=0$. As $\psi(t,t)=0$ is automatic and $\{e_n,n\in\Z,\ t\}$
is
a basis of $\V$ this shows the result.
\end{proof}
Next let $\phi:\V\to \K\cdot t$ be a linear map.
We write for the component functions
$\phi(e_i)=\phi_it$ and $\phi(t)=c\cdot t$.
If we evaluate the 
1-coboundary operator 
\refE{cocred1} for pairs of basis elements we get
\begin{equation}\label{E:cobred}
\delta_1\phi(e_i,e_j)=\phi([e_i,e_j])
=\left((j-i)\phi_{i+j}-\frac {1}{12}(i^3-i)\delta_{i}^{-j}c\right)t,\quad
i,j\in\Z,
\end{equation}
and by \refL{van} we get $\delta_1\phi(e_i,t)=0$, for all $i\in\Z$.

With respect to \refE{cobred} we choose $\phi$ such that 
the cohomologous cocycle $\psi'=\psi-\delta_1\phi$ fulfills 
\begin{equation}\label{E:trans}
\psi'(e_i,e_0)=0,\ \forall i\in\Z,\quad\text{and}\quad
\psi'(e_1,e_{-1})=
\psi'(e_2,e_{-2})=0.
\end{equation}
This is obtained by putting
%\footnote{
%In view of \refR{rem1} we will consider for this part cocycles of
%arbitrary degree.}
\begin{equation}
\phi_i:=-\frac {1}{i}\psi_{e_i,e_0},\ i\in\Z, i\ne 0,
\quad
\phi_0:=-(1/2)\psi_{e_1,e_{-1}},
\quad
c:=(-2\,\psi_{e_2,e_{-2}}-8\phi_0).
\end{equation}
\begin{lemma}\label{L:zero}
The cocycle $\psi'$ is identically zero.
\end{lemma}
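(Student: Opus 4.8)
The plan is to imitate the structure of the Witt-algebra argument (Step 2 / \refL{vanish}), but now working with the scalar-valued cocycle. Since $\psi'$ takes values in $\K\cdot t$, I write $\psi'(e_i,e_j)=c_{i,j}\,t$ and observe that by \refL{van} the only values left to control are the $c_{i,j}$ with $i,j\ne 0$ (and not both zero); the normalization \refE{trans} gives $c_{i,0}=0$ for all $i$, and $c_{1,-1}=c_{2,-2}=0$. The reduced cocycle condition \refE{cocred}, evaluated on the triple $(e_i,e_j,e_k)$ and using the Witt bracket (the central correction terms all involve $\psi'$ of a central element, hence vanish by \refL{van}), becomes exactly
\begin{equation}
0=(j-i)c_{i+j,k}-(k-i)c_{i+k,j}+(k-j)c_{j+k,i}.
\end{equation}
This is the homogeneous analogue of \refE{cocycgen} with all the second-row terms absent, so the recursions will be even simpler than in the Witt case.

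First I would specialize to $k=0$: using $c_{i,0}=0=c_{i+j,0}$ the relation collapses to $(j-i)c_{i+j,0}=0$, which is vacuous, so instead I take $k=1$ and the triple $(i,1,k)\mapsto(i,1,j)$ style specializations. Concretely, setting one index to $1$ and another to $0$ in the displayed identity, and then one index to $1$ and another to $-1$, yields two-term recursions in a single ``level'' index of the form $(\text{linear in }i)\,c_{i+1,m}=(\text{linear in }i)\,c_{i,m}$, exactly as in the proof of \refL{vanish}. Starting from the known zeros $c_{i,0}=0$, $c_{1,-1}=0$, $c_{2,-2}=0$ and the antisymmetry $c_{i,j}=-c_{j,i}$, I propagate: level $m=0$ is already zero; level $m=\pm1$ follows from the recursion with a possibly undetermined middle value that is pinned down using antisymmetry together with the already-established zeros; similarly for $m=\pm2$; and then ascending/descending induction on $|m|$ finishes, the inductive step being a two-term recursion whose leading coefficient ($k-1$ or $k+1$) is nonzero in the relevant range. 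Because the identity here has only three terms instead of six, each of these steps is strictly shorter than its counterpart in \refL{vanish}.

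The main obstacle — really the only subtlety — is the handling of the finitely many ``resonant'' coefficients where a recursion coefficient vanishes: these are the indices near $c_{2,-2}$, $c_{3,-2}$, $c_{-2,2}$, $c_{-3,2}$, just as in the Witt case. For those I would, as there, feed a specific index triple such as $(2,-2,4)$ into the three-term identity; since the level-$2$ and level-$(-2)$ coefficients appearing are already known to vanish from the recursions, the remaining equation forces the resonant value to be zero. An alternative, even quicker route is to invoke that $\dim\Ho^2(\W;\K)=1$ together with the fact that the defining Virasoro cocycle is already normalized so that its values $\omega(e_i,e_0)$ and $\omega(e_1,e_{-1})$, $\omega(e_2,e_{-2})$ vanish, so $\psi'$ cohomologous-to-zero among scalar cocycles would be immediate — but since the section explicitly wants to avoid higher input, I would present the elementary recursion. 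Once all $c_{i,j}=0$, we have $\psi'\equiv0$, so the original $\psi$ was a coboundary, completing the proof of \refL{zero} and hence, combined with \refP{restrcohomo} and the Witt case, the Virasoro part of \refT{main}.
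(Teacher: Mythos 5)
Your plan goes off the rails at its very first step. You dismiss the $k=0$ specialization of the three-term identity as ``vacuous,'' but it is not: with $k=0$ the identity reads
\begin{equation*}
0=(j-i)c_{i+j,0}-(0-i)c_{i,j}+(0-j)c_{j,i},
\end{equation*}
and only the \emph{first} term dies by the normalization $c_{\cdot,0}=0$; the other two are honest level-$j$ and level-$i$ coefficients, and antisymmetry turns them into $(i+j)c_{i,j}=0$. This is precisely the decisive step (it is the paper's first move): it kills every coefficient off the antidiagonal $j=-i$, after which a single induction on $n$ for $c_{n,-n}$, using triples such as $(e_n,e_{-(n-1)},e_{-1})$ and the seeds $c_{1,-1}=c_{2,-2}=0$ from \refE{trans}, finishes the lemma. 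Having discarded this step, your fallback --- level-by-level recursions from the triples $(i,1,k)$ as in \refL{vanish} --- does not work as described: the identity with middle index $1$ contains the term $c_{i+k,1}$, and unlike in the Witt case there is \emph{no} normalization $c_{i,1}=0$ for all $i$ here (\refE{trans} only gives $c_{i,0}=0$ plus two antidiagonal values), so your recursions are three-term with an extra unknown, not two-term. So the proposal as written has a genuine gap, even though the correct argument is actually shorter than what you outline.

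A secondary but real misunderstanding: your ``even quicker route'' asserts that the defining Virasoro cocycle $\omega$ satisfies $\omega(e_2,e_{-2})=0$; in fact $\omega(e_2,e_{-2})=-\tfrac1{12}(2^3-2)\neq0$. The condition $\psi'(e_2,e_{-2})=0$ in \refE{trans} is exactly there to subtract off a possible multiple of this nontrivial class, and it is achievable only because the coboundary of a map with $\phi(t)=c\,t$ produces the term $-\tfrac1{12}(i^3-i)\delta_i^{-j}c\,t$ --- i.e.\ the Virasoro cocycle, while nontrivial in $\Ho^2(\W;\K)$, becomes a coboundary for $\V$ with values in $\K\cdot t$. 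As stated, your shortcut would ``prove'' $\Ho^2(\W;\K)=0$, which is false.
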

\begin{proof}
First we consider the 2-cocycle condition 
\refE{cocred} for the
triple $(e_i,e_j,e_0)$ and obtain using $\psi'(e_i,t)=0$ that
\begin{equation}
0=0+(-j)\psi'(e_j,e_i)+i\,\psi'(e_i,e_j)
=(i+j)\psi'(e_i,e_j).
\end{equation}
This yields that 
\begin{equation}
\psi'(e_i,e_j)=0 \quad \text{for $j\ne -i$}.
\end{equation}
Next we show by  induction that 
$\psi'(e_i,e_{-i})=0$ for all $i\ge0$ (and by antisymmetry also for 
$i\le 0$).
Note that  this is true for $i=0,1$ and $2$, see \refE{trans}.
Consider \refE{cocred} for the
triple $(e_n,e_{-(n-1)},e_{-1})$, $n>2$. After evaluation of the
Lie products we get
\begin{equation}
0=(-2n+1)\psi'(e_1,e_{-1})
+(n-2)\psi'(e_{-n},e_{n})
+(-n+1)\psi'(e_{n-1},e_{-(n-1)}).
\end{equation}
The first and last term vanishes by induction. As $n>2$ this implies
\begin{equation}
\psi'(e_{n},e_{-n})=0.
\end{equation}
As we have always 
$\psi'(x,t)=0$ this shows the lemma.
\end{proof}
Finally we obtain that the $\psi$ we started with was a
coboundary.
This implies indeed $\Ho^2(\V;\V)=\{0\}$.

% \begin{remark}\label{R:rem1}
% As already mentioned above  the components $\psi_{x,y}$ are 2-cocycles 
% of $\V$ with values in the trivial module, and  coboundaries are 
% exactly coboundaries in this cohomology. 
% What we showed with our small calculation in \refL{zero} is that 
% $\Ho^2(\V,\K)=\{0\}$.
% Note that our calculation was rather direct and we did not
% a priori use the reduction to the degree zero part.
% The vanishing of this cohomology 
% means that all central extensions of the Virasoro algebra 
% are trivial, i.e. they split.
% Of course, from a more general point of view one knows this 
% already as  the Virasoro algebra is the 
% universal central extension of $\W$. This follows from the fact
% that $\W$ is a perfect Lie algebra, i.e. $[\W,\W]=\W$ and that
% $\Ho(\W,\K)$ is one-dimensional generated by the cocycle 
% class of the defining cocycle for $\V$. 
% Moreover, the above calculations 
% can be extended to show the well-know fact  
% that 
% $\dim \Ho^2(W,\K)=1$ and  is generated by the class of the
% cocycle defining $\V$, see \refE{Vstruct}.
% As it was our intention to stay self-contained we preferred
% to do the short calculation here, instead making reference to 
% general results.
% \end{remark}
%%%%%%%%%%%%%%%%%%%%%%%%
\begin{remark}\label{R:rem2}
What has been done in this section can also be 
interpreted 
in the framework of long exact cohomology sequences
in Lie algebra cohomology.
The exact sequence \refE{cext} of Lie algebras is also 
a exact sequence of Lie modules over $\V$. For such sequences we have
a long exact sequence in cohomology. If we consider only 
level two we get
\begin{equation}\label{E:longcext}
\begin{CD}
\cdots @>>> \Ho^2(\V;\K)  
@>>>\Ho^2(\V ;\V)
@>\nu_*>>\Ho^2(\V; \W) @>>> \cdots\ .
\end{CD}
\end{equation} 
\refP{restrcohomo}
also shows that naturally $\Ho^2(\V; \W)\cong \Ho^2(\W; \W)$ 
(which is not a consequence from the 
existence of the long exact sequence).
The calculations of \refL{van} and \refL{zero} showed 
$ \Ho^2(\V;\K)=\{0\}$.
From \refS{zero} we know   $\Ho^2(\W; \W)=\{0\}$.
Hence, also
$\Ho^2(\V ;\V)=\{0\}$.
\end{remark}

%%%%%%%%%%%%%%%%%%%%%%%%%%%%%%%%%%%%%%%%%%%%
%\end{thebibliography}
%\input wittcohomo.bbl
%\newpage

%%%%%%%%%%%%%%%%%%%%%%%%%%%%%%%%%%%%%%%%%%%%%%%%%%%% 

\begin{thebibliography}{10}




\bibitem{Fianote}
Fialowski, A.:
\emph{Unpublished Note}, 1989, in the meantime published 
as  arXiv:1202.3132,
to appear in Jour. Math. Phys. 2012


 \bibitem{Fiaproc}
 Fialowski, A.:
 \emph{An example of formal deformations of Lie algebras}.
 In: {Proceedings of NATO Conference on Deformation Theory of 
 Algebras and Applications, Il Ciocco, Italy, 1986,} pp. 375--401,
 Kluwer, Dordrecht, 1988.

\bibitem{Fiajmp}
 Fialowski, A.:
 \emph{Deformations of some infinite-dimensional Lie algebras}.
J. Math. Phys. 31(6) 1340--1343 (1990)


\bibitem{FiaFuMini}
Fialowski, A., and   Fuks, D.:
\emph{Construction of miniversal deformations of Lie algebras}.
J. Funct. Anal. {\bf 161}, 76-110 (1999).


\bibitem{FiaSchlV}
Fialowski, A., and   Schlichenmaier, M.:
\emph{Global deformations of the Witt algebra of Krichever
Novikov type}.
Comm. Contemp. Math. {\bf 5}, 921--945 (2003).


\bibitem{FiaSchlaff}
Fialowski, A., and   Schlichenmaier, M.:
\emph{Global Geometric Deformations of Current Algebras as
Krichever--Novikov Type Algebras}.
Comm.  Math. Phys. (2005), DOI 10.1007/s00220-005-1423-5.



\bibitem{FiaSchlaffo} 
Fialowski, A., and   Schlichenmaier, M.:
\emph{Global Geometric Deformations of the Virasoro algebra, current and
affine  algebras by
Krichever-Novikov type algebras},
International Journal
of Theoretical Physics.
Vol. 46, No. 11  (2007)
pp.2708 - 2724


\bibitem{Fuks}
Fuks, D.:
\emph{Cohomology of Infinite-dimensional Lie Algebras},
Consultants Bureau, N.Y., London, 1986.

\bibitem{Ger}
Gerstenhaber, M.:
\emph{On the deformation of rings and algebras I,II,III}
Ann. Math. {\bf 79}, 59-10 (1964),
{\bf 84}, 1-19 (1966),
{\bf 88}, 1-34 (1968).


\bibitem{Gon}
Goncharova, I.V.:
\emph{Cohomology of Lie algebras of formal vector fields on the line}.
Funct. Anal. Appl. {\bf 7}, No.2, 6-14 (1973)

\bibitem{GR}
Guieu, L., Roger, C.:
\emph{L'alg\`ebre et le groupe de Virasoro.}
Les publications CRM, Montreal 2007.



\bibitem{Resc}
Reshetnikov, V.N.:
\emph{On the cohomology of the Lie algebra of vector fields on a
circle}.
Usp. Mat. Nauk {\bf 26}, 231-232 (1971)




\bibitem{SchlDeg}
Schlichenmaier, M.:
\emph{Degenerations of generalized
Krichever-Novikov algebras on tori},
Jour.  Math. Phys. {\bf 34}, 3809-3824 (1993).



\bibitem{Tsu}
Tsujishita, T.:
\emph{On the continuous cohomology of the Lie algebra of vector fields}.
Proc. Japan Acad. {\bf 53}, Sec. A, 134-138 (1977)


\end{thebibliography}
\end{document}